\newtheorem{theorem}{Theorem}
\newtheorem{lemma}{Lemma}
\newtheorem{conjecture}{Conjecture}
\begin{document}
	\title{The Linear Arboricity Conjecture for Graphs with Large Girth}
	%
	%
	\author{Tapas Kumar Mishra \footnote{Orcid: 0000-0002-9825-3828} \\
		%
		%
		Department of Computer Science and Engineering,\\
		National Institute of Technology, Rourkela
		769008, India\\
		mishrat@nitrkl.ac.in\\
		https://mishra-tapas.github.io/}
	\maketitle              
	\begin{abstract}
		The Linear Arboricity Conjecture asserts that the linear arboricity of a graph with maximum degree $\Delta$ is $\lceil (\Delta+1)/2 \rceil$. For a $2k$-regular graph $G$, this implies $la(G) = k+1$. In this note, we utilize a network flow construction to establish upper bounds on $la(G)$ conditioned on the girth $g(G)$. We prove that if $g(G) \ge 2k$, the conjecture holds true, i.e., $la(G) \le k+1$. Furthermore, we demonstrate that for graphs with girth $g(G)$ at least $k$, $k/2$, $k/4$ and $2k/c$ for any integer constant $c$, the linear arboricity $la(G)$ satisfies the upper bounds $k+2$, $k+3$, $k+5$ and $k+\left\lceil \frac{3c+2}{2}\right\rceil$, respectively. Our approach relies on decomposing the graph into $k$ edge-disjoint 2-factors and constructing an auxiliary flow network with lower bound constraints to identify a sparse transversal subgraph that intersects every cycle in the decomposition.
		
	\end{abstract}
	
	\section{Introduction}
	
	A linear forest is a graph in which every connected component is a path. The linear arboricity of a graph $G$, denoted by $la(G)$, is defined as the minimum number of linear forests whose edge sets partition the edge set $E(G)$. This concept was introduced by Harary \cite{harary1970} as a natural refinement of the arboricity of a graph, covering $G$ with paths rather than general forests. For any graph $G$ with maximum degree $\Delta(G)$, simple counting arguments provide a trivial lower bound. Since every vertex $v$ has degree $d(v)$ and the maximum degree in a linear forest is 2 (at internal nodes) or 1 (at endpoints), at least $\lceil \Delta(G)/2 \rceil$ linear forests are required to cover the edges incident to a vertex of maximum degree. Thus, $la(G) \ge \lceil \Delta(G)/2 \rceil$.
	
	In 1981, Akiyama, Exoo, and Harary proposed the following conjecture, asserting that this lower bound is nearly tight:
	\begin{conjecture}[Linear Arboricity Conjecture \cite{akiyama1981}]
For any graph $G$ with maximum degree $\Delta$,$$la(G) = \left\lceil \frac{\Delta + 1}{2} \right\rceil.$$
	\end{conjecture}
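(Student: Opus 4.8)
The plan is to reduce the conjecture to regular graphs, decompose via $2$-factors, and concentrate the entire difficulty into the choice of a single transversal set of edges that must itself form a linear forest. First I would note that it suffices to prove the upper bound $la(G)\le\lceil(\Delta+1)/2\rceil$, the lower bound being the elementary counting bound stated above; and that one may assume $G$ is $\Delta$-regular, since every graph of maximum degree $\Delta$ embeds as a subgraph of a $\Delta$-regular graph and any linear-forest decomposition restricts to subgraphs. The substantive case is then $\Delta=2k$, where the target is $la(G)\le k+1$.

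For $\Delta=2k$, I would apply Petersen's theorem to split $G$ into $k$ edge-disjoint $2$-factors $C_1,\dots,C_k$, each a disjoint union of cycles. Deleting exactly one edge from every cycle of every $C_i$ converts each $C_i$ into a linear forest, so $G$ is covered by these $k$ linear forests together with the set $F$ of all deleted edges. The crux is to choose the transversal $F$ so that $F$ is itself a linear forest: if every vertex is incident to at most two edges of $F$ and $F$ is acyclic, then $F$ supplies the $(k+1)$-st linear forest and $la(G)\le k+1=\lceil(2k+1)/2\rceil$. I would formulate this selection exactly as the lower-bounded flow/transversal problem developed in this note, which forces one chosen edge per cycle while bounding the number of chosen edges meeting any vertex.

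For $\Delta=2k+1$ I would first peel off a matching $M$ so that $G-M$ is $2k$-regular, reducing to the previous case, and then \emph{merge} $M$ with the transversal: the $(k+1)$-st forest becomes $M\cup F$, which keeps the total at $k+1=\lceil(2k+2)/2\rceil$ rather than $k+2$. Securing such an $M$ (and, in general, handling $(2k+1)$-regular graphs without a perfect matching) requires a Petersen/edge-connectivity or edge-colouring argument, a technical but not conceptual difficulty. The genuine obstacle---and the reason the conjecture is still open in full---is the transversal step: short cycles force many deletions, and in the worst case (e.g.\ a vertex lying on a triangle in many of the $2$-factors) the chosen edges can pile up at one vertex, so $F$ is forced to have large degree and cannot be a single linear forest. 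A complete proof would need a global charging or probabilistic argument (in the spirit of Alon's Local Lemma approach) to guarantee that the transversal edges can always be dispersed; the girth-conditioned results of this paper succeed precisely because long cycles give the flow network the slack needed to avoid such concentration.
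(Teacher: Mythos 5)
There is a genuine gap---indeed, the statement you were given is the Linear Arboricity Conjecture itself, which the paper states as an open conjecture and never proves: the paper only establishes girth-conditional special cases (Theorems 3--7). Your proposal, to its credit, recognizes this; your final paragraph concedes that the transversal step would require ``a global charging or probabilistic argument'' and that the conjecture ``is still open in full.'' So what you have written is a proof outline with an acknowledged hole, not a proof, and the hole is precisely the content of the conjecture for even-degree regular graphs. The flow machinery of this paper cannot close it: with $\delta=1$ the network forces the transversal $H$ to be a matching, but feasibility of the uniform flow then demands $g(G)\ge 2k$ (Theorem 3); with $\delta=2$ the integer flow only guarantees $\Delta(H)\le 2$, so $H$ may contain cycles and must be split into \emph{two} linear forests, which is why the paper pays $k+2$ in Theorem 4 rather than $k+1$. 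Producing a transversal that is simultaneously acyclic, of maximum degree at most $2$, and meets every cycle of every $2$-factor, with no girth hypothesis, is exactly the open problem; when short cycles abound, the demand they place at a single vertex can exceed any fixed capacity, and no flow (fractional or integral) exists.

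Two further steps in your outline are also unjustified. First, in the odd case $\Delta=2k+1$, a $(2k+1)$-regular graph need not possess a perfect matching at all (you gesture at this but do not resolve it), so ``peeling off $M$'' can fail; the standard reductions for odd $\Delta$ are genuinely harder and are not handled by Petersen's theorem. Second, even when $M$ exists, the merged set $M\cup F$ need not be a linear forest: the union of a matching and a linear forest can contain vertices of degree $3$ and cycles, so your claim that the count stays at $k+1$ does not follow. Your opening reduction to regular graphs is fine (the paper's Lemma 1 even gives a girth-preserving version of that embedding), but everything after the $2$-factor decomposition rests on the unproven transversal claim. The honest summary is that your proposal reproduces the paper's strategy and correctly diagnoses why that strategy, as it stands, proves only the girth-conditional theorems---not the conjecture.
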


	The conjecture has been verified for specific classes of graphs, including trees, complete graphs, and complete bipartite graphs \cite{akiyama1980,akiyama1981}. In 1988, Alon \cite{alon1988} proved using probabilistic methods that the conjecture holds asymptotically; that is, $la(G) = \frac{\Delta}{2} + o(\Delta)$ as $\Delta \to \infty$.
	Ferber, Fox and Jain \cite{ferber2020} established  an upper bound of $\Delta/2+O(\Delta^{.661})$, which was further improved to $\Delta/2+3\sqrt{\Delta}\log^4\Delta$
	by Lang and Postle \cite{lang2023} when $\Delta$ is sufficiently large.
	 However, the exact conjecture remains open for general graphs.
	
	This note focuses on the case of $2k$-regular graphs. For a $2k$-regular graph $G$, the lower bound is $\lceil 2k/2 \rceil = k$. However, since a union of $k$ linear forests has maximum degree $2k$ only if every vertex is an internal node of a path in every forest (implying the forests are spanning cycles), it is impossible to cover a regular graph with $k$ linear forests. Thus, for regular graphs, the conjecture specifically predicts:$$la(G) = k + 1.$$
	While probabilistic approaches like the Lov\'{a}sz Local Lemma are powerful for general bounds, constructive arguments often provide sharper insights for structured subclasses. 
	
	\subsection{Our results}
	In this paper, we explore the relationship between the girth of a graph, denoted $g(G)$, and its linear arboricity. We utilize Petersen's 2-Factor Theorem \cite{petersen1891} to decompose the $2k$ regular graph into $k$ edge-disjoint 2-factors, reducing the problem to finding a sparse transversal subgraph that intersects every cycle. We formulate this cycle-breaking problem as a network flow problem with lower bound constraints. By applying the Integrality Theorem for circulations \cite{hoffman1960}, we derive deterministic upper bounds for regular graphs with high girth. Our main results establish that if $g(G) \ge 2k$, the Linear Arboricity Conjecture holds (i.e., $la(G) \le k+1$). This is an improvement to the Theorem 2.1 of \cite{alon1988}. Furthermore, we show that a weaker bounds of $k+2$, $k+3$, $k+5$ and $k+\left\lceil \frac{3c+2}{2}\right\rceil$ can be achieved under the relaxed conditions on girth $g(G)$ at least $k$, $k/2$, $k/4$ and $2k/c$, respectively.
	
	\section{Preliminaries}
	
	In this paper, all graphs are finite, simple, and undirected unless otherwise stated. For a graph $G = (V, E)$, let $V(G)$ and $E(G)$ denote the vertex set and edge set, respectively. The degree of a vertex $v$ is denoted by $d_G(v)$, and the maximum degree of $G$ is $\Delta(G)$. The girth of $G$, denoted $g(G)$, is the length of the shortest cycle in $G$.
	
	A linear forest is a graph in which every component is a path. The linear arboricity $la(G)$ is the minimum number of linear forests $L_1, \dots, L_k$ such that $E(G) = \bigcup_{i=1}^k E(L_i)$.
	
	\subsection{2-Factor Decomposition}
	Our approach relies on decomposing a regular graph into simpler cycle structures. A 2-factor of a graph $G$ is a spanning subgraph in which every vertex has degree 2. Consequently, every connected component of a 2-factor is a cycle.We utilize the classical result by Petersen regarding the decomposition of regular graphs of even degree.
	
	\begin{theorem}[Petersen's 2-Factor Theorem \cite{petersen1891}]
Let $G$ be a $2k$-regular graph. Then $E(G)$ can be decomposed into $k$ edge-disjoint 2-factors $F_1, F_2, \dots, F_k$.
	\end{theorem}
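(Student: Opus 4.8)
The plan is to combine an Eulerian orientation of $G$ with the decomposition of a regular bipartite graph into perfect matchings. First I would reduce to the connected case: a $2k$-regular graph is a disjoint union of $2k$-regular connected components, and 2-factors of the separate components glue together into 2-factors of $G$, so it suffices to treat one component at a time. Since $G$ is $2k$-regular, every vertex has even degree, and a connected graph in which every vertex has even degree admits an Eulerian circuit. Traversing this circuit and orienting each edge in the direction of traversal produces a digraph $D$ on $V(G)$ in which every vertex is entered and left the same number of times; because $d_G(v)=2k$, each vertex $v$ acquires in-degree $k$ and out-degree $k$ in $D$.

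Next I would encode $D$ as a balanced bipartite graph. Form $H$ with parts $V^{+}=\{v^{+}:v\in V\}$ and $V^{-}=\{v^{-}:v\in V\}$, placing an edge $u^{+}v^{-}$ for each arc $(u,v)$ of $D$. Then each $v^{+}$ has degree equal to the out-degree of $v$ and each $v^{-}$ has degree equal to its in-degree, so $H$ is $k$-regular bipartite. By K\"{o}nig's edge-coloring theorem (equivalently, by repeated application of Hall's theorem), the edge set of a $k$-regular bipartite graph partitions into $k$ perfect matchings $M_1,\dots,M_k$.

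Finally I would translate each matching back to $G$. A perfect matching $M_i$ selects, for every vertex, exactly one arc leaving it (the edge at $v^{+}$) and exactly one arc entering it (the edge at $v^{-}$); the corresponding undirected edges therefore form a subgraph $F_i$ in which every vertex has degree exactly $2$, i.e.\ a 2-factor, whose components are necessarily cycles. Because the matchings partition $E(H)$ and the arcs of $D$ correspond bijectively to the edges of $G$, the resulting $F_1,\dots,F_k$ are pairwise edge-disjoint and cover all of $E(G)$, giving the desired decomposition.

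The step I expect to require the most care is verifying that the arc-to-edge correspondence is a genuine bijection, so that ``in-degree one and out-degree one at every vertex'' really yields undirected degree exactly two rather than double-counting an edge. I would also confirm that K\"{o}nig's theorem is being invoked legitimately: the decomposition into perfect matchings holds for bipartite multigraphs, so it applies to $H$ even if the Eulerian orientation induces parallel edges $u^{+}v^{-}$, and this is the only place where the bipartite structure (as opposed to mere regularity) is essential.
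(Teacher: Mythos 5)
Your proof is correct. The paper itself gives no proof of this theorem --- it is stated as a classical result with a citation to Petersen and then used as a black box --- so there is no internal argument to compare against; your Eulerian-orientation plus K\"onig's-theorem argument is the standard modern proof and it is complete. The two points you flagged as needing care do check out: since $G$ is simple, each undirected edge $\{u,v\}$ receives exactly one orientation, so the selected out-arc and in-arc at a vertex $v$ necessarily come from two distinct edges of $G$ (if both came from $\{u,v\}$, that edge would have to be oriented both as $(v,u)$ and as $(u,v)$), giving undirected degree exactly $2$; and the auxiliary bipartite graph $H$ is in fact simple here (at most one arc between any ordered pair), though, as you note, the matching decomposition of regular bipartite graphs holds for multigraphs anyway, so nothing is at risk. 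One tiny housekeeping remark: the connected-component reduction implicitly assumes $k\ge 1$ so that each component actually has an Eulerian circuit with edges to orient; the case $k=0$ is vacuous.
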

	This theorem allows us to view a $2k$-regular graph as a union of $k$ collections of disjoint cycles. To prove upper bounds on $la(G)$, our strategy is to remove a `transversal' set of edges that breaks every cycle in these factors.
	
	\subsection{Network Flows with Demands}
	We frame the problem of selecting such transversal edges as a feasible flow problem. A flow network $\mathcal{N} = (V, A)$ consists of a set of nodes $V$, a set of directed arcs $A$, and two functions: a capacity $c: A \to \mathbb{Z}_{\ge 0} \cup \{\infty\}$ and a lower bound (or demand) $\ell: A \to \mathbb{Z}_{\ge 0}$.
	
	A function $f: A \to \mathbb{R}_{\ge 0}$ is a feasible flow (or feasible circulation if there are no source/sink nodes) if it satisfies:
	\begin{itemize}
\item Capacity Constraints: $\ell(u, v) \le f(u, v) \le c(u, v)$ for all $(u, v) \in A$.
\item Flow Conservation: For every node $v \in V \setminus \{S, T\}$, the total flow entering $v$ equals the total flow leaving $v$:$$\sum_{(u, v) \in A} f(u, v) = \sum_{(v, w) \in A} f(v, w).$$
	\end{itemize}
	
	We rely on the following fundamental result connecting fractional flows to integer solutions. This property ensures that if we can distribute fractional `weights' to edges to satisfy cycle-breaking conditions, an integer selection of edges exists.
	
	\begin{theorem}[Integrality Theorem for Flows \cite{hoffman1960}]
Given a network $\mathcal{N}$ with integer capacities $c$ and integer lower bounds $\ell$, if there exists a feasible fractional flow $f: A \to \mathbb{R}_{\ge 0}$, then there exists a feasible integer flow $f^*: A \to \mathbb{Z}_{\ge 0}$.
	\end{theorem}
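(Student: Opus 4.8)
The plan is to give a direct combinatorial proof by \emph{cycle cancellation}, turning any fractional feasible flow into an integer one through a sequence of local rounding steps; this is more self-contained than invoking total unimodularity, though I will note that route as an alternative at the end. First I would reduce to the case of a circulation: by adjoining a return arc from $T$ to $S$ carrying the current (possibly fractional) flow value, with capacity $\infty$ and lower bound $0$, flow conservation holds at \emph{every} node, and an integer circulation on the augmented network restricts to an integer feasible flow on the original one. So from now on assume $f$ satisfies conservation everywhere, with $\ell(a) \le f(a) \le c(a)$ and $\ell, c$ integral.

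The key structural observation is that the arcs on which $f$ is non-integral cannot form a forest. Fix the node-balance equation at any node $v$ and split each $f(a)$ into its integer part and its fractional part $\{f(a)\} \in (0,1)$. Because the integer parts contribute an integer to both sides of the conservation equation, the net contribution of the fractional arcs at $v$ must itself be an integer. If some node $v$ were incident to exactly one non-integral arc, that arc would force a single term $\pm\{f(a)\}$ to be an integer, which is impossible since $0 < \{f(a)\} < 1$. Hence every node touched by the fractional support has at least two incident fractional arcs, so the underlying undirected graph on the fractional arcs has minimum degree at least $2$ and therefore contains a cycle $C$.

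I would then cancel flow around $C$. Orient $C$ and let $\varepsilon > 0$ be a parameter: add $\varepsilon$ to the flow on the forward arcs of $C$ and subtract $\varepsilon$ from the flow on the backward arcs. This operation preserves conservation at every node, since each node of $C$ sees one incoming and one outgoing change of equal magnitude. Crucially, every arc of $C$ is fractional, and since $\ell, c$ are integers we have the strict inequalities $\ell(a) < f(a) < c(a)$ on each such arc, so there is genuine slack in both directions. I increase $\varepsilon$ (choosing, if necessary, the reverse orientation) until the first arc of $C$ reaches an integer value -- either an integer strictly inside its bounds or one of the integral bounds $\ell(a), c(a)$. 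Even when some capacity is $\infty$ this step is finite, because an increasing arc always reaches its ceiling $\lceil f(a)\rceil$ at a finite $\varepsilon$; so infinite capacities pose no difficulty. The resulting flow is still feasible and has \emph{strictly fewer} fractional arcs.

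Iterating this cancellation strictly decreases the finite number of non-integral arcs at each step, so after finitely many rounds the flow is integral and feasible, proving the claim. The step I expect to require the most care is verifying that the rounding genuinely reduces the count of fractional arcs while maintaining feasibility -- the linchpin is the integrality of $\ell$ and $c$, which guarantees both that a fractional arc has strict slack to absorb the push and that the stopping value is itself an integer. For completeness I would remark that the same conclusion follows abstractly from the total unimodularity of the node--arc incidence matrix: the feasible region $\{f : Bf = 0,\ \ell \le f \le c\}$ is then an integral polyhedron, so if it is nonempty it contains an integral point.
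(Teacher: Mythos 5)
The paper does not prove this statement at all: it is imported as a black-box citation to Hoffman (1960), so there is no in-paper argument to compare against. Your cycle-cancellation proof is correct and self-contained. The reduction to a circulation via a return arc, the parity observation that at every node the net contribution of the fractional parts must be an integer (so the fractional support has minimum degree at least two and hence contains a cycle), the strict slack $\ell(a) < f(a) < c(a)$ on fractional arcs forced by the integrality of the bounds, and termination by strict decrease of the number of non-integral arcs are all sound; you also correctly dispose of infinite capacities by noting a forward arc stops at $\lceil f(a)\rceil$. Two small points to tidy in a written-up version: (i) since flows are required to be nonnegative, the return arc should be oriented $T \to S$ or $S \to T$ according to the sign of the net excess at $S$ (in the paper's layered network the value is nonnegative, so this is moot); (ii) the ``cycle'' in the fractional support may consist of two parallel arcs between the same pair of nodes, which your augmentation handles without modification. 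The total-unimodularity route you sketch at the end is an equally valid alternative. Relative to the paper, your proof makes constructive and elementary the one external tool on which all of the main theorems rest, whereas the paper simply gains brevity by citing it.
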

	In our construction, the existence of a fractional flow will follow from the density properties of the graph (regularity) and its girth, while the integer flow provided by Theorem 2.2 will correspond to the discrete subgraph of edges we remove.
	
\section{Main Results}	
	
In this section, we construct a flow network to model the selection of a cycle-breaking subgraph and apply it to prove our upper bounds for $la(G)$.

\subsection{The Flow Network Construction}

Let $G$ be a $2k$-regular graph. By Theorem 2.1, we decompose $E(G)$ into $k$ edge-disjoint 2-factors $\mathcal{F} = \{F_1, \dots, F_k\}$. Each $F_i$ is a collection of disjoint cycles. Let $\mathcal{C}$ be the set of all cycles contained in these 2-factors.

We construct a flow network $\mathcal{N}$ designed to select a subset of edges $H \subseteq E(G)$ that intersects every cycle in $\mathcal{C}$ while respecting maximum degree constraints. The network consists of a source $S$, a sink $T$, and three layers of internal nodes representing cycles, edges, and vertices.

{\bf Nodes:} $V_{\mathcal{N}} = \{S, T\} \cup \mathcal{C} \cup E(G) \cup V(G)$.

{\bf Arcs and Constraints:}

The arc set $A_{\mathcal{N}}$ is defined with specific lower bounds $\ell(\cdot)$ and capacities $c(\cdot)$ as follows:

\begin{itemize}
\item \textbf{Demand Layer ($S \to \mathcal{C}$):}
For each cycle $C_j \in \mathcal{C}$, add an arc $(S, C_j)$.
$\ell(S, C_j) = 1$ (Constraint: At least one unit of flow must pass through each cycle).
$c(S, C_j) = \infty$.
\item \textbf{Selection Layer ($\mathcal{C} \to E(G)$):}
For each cycle $C_j$ and each edge $e \in E(C_j)$, add an arc $(C_j, e)$.
$\ell(C_j, e) = 0$.
$c(C_j, e) = 1$ (Constraint: An edge can be selected at most once for a specific cycle).
\item \textbf{Incidence Layer ($E(G) \to V(G)$):}
For each edge $e = \{u, v\} \in E(G)$, add arcs $(e, u)$ and $(e, v)$. $\ell(e, u) = 0$. $c(e, u) = 1$.
\item \textbf{Capacity Layer ($V(G) \to T$):}
For each vertex $v \in V(G)$, add an arc $(v, T)$. $\ell(v, T) = 0$. $c(v, T) = \delta$, where $\delta$ is an integer parameter determining the maximum degree of the selected subgraph $H$.
\end{itemize}

\subsection{Linear Arboricity of Graphs with Girth $g(G) \ge 2k$}
We first consider the case where we enforce the tightest possible degree constraint on the transversal subgraph leading to an improvement of Theorem 2.1 of \cite{alon1988}.

\begin{theorem}
Let $G$ be a $2k$-regular graph with girth $g(G) \ge 2k$. Then $la(G) \le k+1$.
\end{theorem}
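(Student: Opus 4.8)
The plan is to carry out exactly the reduction set up above. First I would invoke Theorem 2.1 to write $E(G) = F_1 \cup \dots \cup F_k$ as $k$ edge-disjoint $2$-factors, each a disjoint union of cycles, so that the whole problem collapses to producing a single transversal subgraph $H \subseteq E(G)$ that meets every cycle in $\mathcal{C}$. Once such an $H$ is secured, each $F_i \setminus H$ is a union of cycles with at least one edge deleted from each, hence a disjoint union of paths, i.e.\ a linear forest; this already supplies $k$ of the forests. To reach the target bound $k+1$ it then suffices to arrange that $H$ is itself a single linear forest, and the cleanest way to guarantee this is to force $H$ to have maximum degree $1$, so that $H$ is a matching and thus trivially a linear forest.

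Accordingly I would instantiate the network $\mathcal{N}$ with the tightest degree parameter $\delta = 1$ and first produce a feasible \emph{fractional} flow by hand, after which the Integrality Theorem (Theorem 2.2) promotes it to an integral flow whose support on the selection arcs $(C_j, e)$ is the desired edge set $H$. The explicit fractional flow I have in mind pushes one unit into every cycle, $f(S, C_j) = 1$, spreads it uniformly over that cycle's edges, $f(C_j, e) = 1/|C_j|$ for each $e \in E(C_j)$, and then routes the flow arriving at each edge node $e = \{u,v\}$ entirely to a single (arbitrarily fixed) endpoint, finally draining it through the capacity layer to $T$. Conservation at the cycle nodes is immediate, and the demand $\ell(S, C_j) = 1$ holds with equality, so the only constraints needing genuine verification are the capacities.

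The capacities on $(S, C_j)$, $(C_j, e)$ and the incidence arcs are slack, since every cycle has length at least $g(G) \ge 2k$ and hence $f(C_j, e) = 1/|C_j| \le 1/(2k) \le 1$. The single binding family is the capacity layer $c(v,T) = \delta = 1$, and this is precisely where the girth hypothesis is spent: each vertex $v$ has degree $2k$, so it lies on exactly $2k$ edges (one per factor), and each such edge sits on a cycle of length at least $2k$, contributing at most $1/g(G)$ to the inflow at $v$. Bounding crudely, the total inflow at $v$ is at most $2k/g(G) \le 1 = \delta$, and the inequality holds exactly when $g(G) \ge 2k$. Setting $f(v,T)$ equal to this inflow (and, if one prefers Theorem 2.2 phrased for circulations, closing $\mathcal{N}$ with an uncapacitated return arc $(T,S)$) yields a feasible fractional flow; integrality then gives an integral flow selecting at least one edge from each cycle, which is the transversal $H$.

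The step I expect to be the main obstacle is not the feasibility computation but the translation from the integral flow back to a \emph{genuine} linear forest. The capacity layer controls the flow \emph{arriving} at each vertex, so after integralization it certifies only that at most $\delta$ selected edges are charged to $v$ when each selected edge is oriented toward the endpoint receiving its unit of flow; a priori this bounds an oriented in-degree rather than the undirected degree $\deg_H(v)$, and an in-degree-$1$ subgraph can still be a high-degree star. The delicate point, and where I expect the full strength of $g(G) \ge 2k$ (as opposed to the weaker $g(G) \ge k$ used for the $k+2$ bound) to be consumed, is to show that the charging can be organized to coincide with the true degree, so that $\delta = 1$ really forces $H$ to be a matching. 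I would attack this by orienting each selected edge toward its flow-recipient, choosing the fractional flow symmetrically across the two incidence arcs $(e,u)$ and $(e,v)$, and using the factor-two slack from the girth to rule out two selected edges meeting at a common vertex; pinning the undirected degree to one in this way is exactly what closes the argument and delivers $la(G) \le k+1$.
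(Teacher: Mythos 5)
Your proposal mirrors the paper's argument step for step: decompose $G$ into $k$ two-factors via Petersen's theorem, set $\delta=1$ in the auxiliary network, exhibit a uniform fractional flow with $f(C_j,e)=1/|C_j|$, invoke the Integrality Theorem, and take the selected edges as the transversal $M$. But the obstacle you single out in your final paragraph is a genuine gap, and your proposal does not close it. The capacity $c(v,T)=1$ bounds only the flow \emph{entering} the vertex node $v$, i.e., the number of selected edges that route their unit of flow to $v$. In an integer flow each selected edge $e=\{u,v\}$ sends its single unit to exactly one endpoint, so the constraint bounds the in-degree of $v$ under the flow-induced orientation of $M$, not the undirected degree $\deg_M(v)$. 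A star $K_{1,m}$ oriented away from its center has every in-degree at most $1$, so the network with $\delta=1$ is perfectly content to select $m$ edges meeting at a common vertex; such an $M$ is not a matching, and its linear arboricity is $\lceil m/2\rceil$, which destroys the $k+1$ count. Your proposed repair --- choosing the fractional flow symmetrically on $(e,u)$ and $(e,v)$ and appealing to the ``factor-two slack'' from the girth --- is not an argument: the Integrality Theorem guarantees only that \emph{some} feasible integer flow exists, with no control over how close it stays to the fractional one or over which endpoint each selected edge charges.

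You should also know that the paper's own proof does not resolve this point either; it simply asserts that $\delta=1$ ``enforces $\Delta(H)\le 1$, i.e., $H$ is a matching,'' which is precisely the non sequitur you identified (its fractional bookkeeping, which charges each edge's full flow $x$ to \emph{every} incident vertex while the edge node carries only $x$, is likewise inconsistent with flow conservation). Nor is the defect easily patched inside this network: to force $\deg_M(v)\le 1$ every selected edge would have to be charged to both endpoints, which would require a selected edge node to receive and emit two units, and an integer flow may then legally pass a single unit through it, half-selecting the edge. So the gap is structural. Credit to you for locating the exact weak point, but as written neither your proposal nor the paper's proof establishes the theorem; closing the gap seems to need a different tool (e.g., an independent-transversal argument over the cycles in the line graph) rather than a tweak to the flow.
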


\begin{proof}
Set the capacity parameter $\delta = 1$ for all arcs $(v, T)$ in the network $\mathcal{N}$. This constraint enforces that any integer flow corresponds to a subgraph $H$ with maximum degree $\Delta(H) \le 1$, i.e., $H$ is a matching.

We investigate the existence of a feasible flow. By Theorem 2.2, it suffices to show that a valid fractional flow exists. Consider a uniform flow assignment where every edge node $e \in E(G)$ carries a flow value of $x$.
The feasibility conditions are:
\begin{enumerate}
	\item {\bf Vertex Capacity Constraint: }
	
	The total flow entering any vertex node $v$ is the sum of flows from its incident edges. Since $G$ is $2k$-regular, $2k$ edges are incident to $v$.$$2k \cdot x \le c(v, T) \implies 2k \cdot x \le 1 \implies x \le \frac{1}{2k}.$$
	\item {\bf Cycle Demand Constraint:}
	For any cycle $C_j \in \mathcal{C}$, the total flow passing through node $C_j$ is the sum of flows on its edges.$$|C_j| \cdot x \ge \ell(S, C_j) \implies |C_j| \cdot x \ge 1 \implies x \ge \frac{1}{|C_j|}.$$Combining these, a feasible uniform flow exists if and only if $\frac{1}{|C_j|} \le \frac{1}{2k}$ for all $C_j \in \mathcal{C}$. This is equivalent to $|C_j| \ge 2k$.
\end{enumerate}

 Since $g(G) \ge 2k$, every cycle in the decomposition has length at least $2k$. Thus, setting $x = \frac{1}{2k}$ yields a feasible fractional flow.
 By the Integrality Theorem, there exists an integer flow $f^*$. Let $M = \{e \in E(G) \mid f^*(e, \cdot) = 1\}$.
 
 \begin{itemize}
 	\item From the capacity constraint ($\delta=1$), $\Delta(G[M]) \le 1$, so $M$ is a matching.
 	 \item From the demand constraint, $M$ contains at least one edge from every cycle in $\mathcal{F}$.
 \end{itemize}
 Let $L_i = F_i \setminus M$ for $i=1, \dots, k$. Since every cycle in $F_i$ is broken, each $L_i$ is a linear forest. The set $M$, being a matching, is also a linear forest (denoted $L_{k+1}$). Thus, $E(G) = \bigcup_{i=1}^{k+1} L_i$, proving $la(G) \le k+1$.
\end{proof}

\subsection{Linear Arboricity of Graphs with Girth $g(G) \ge k$}
Next, we relax the degree constraint to obtain a bound for graphs with moderate girth.

\begin{theorem}
Let $G$ be a $2k$-regular graph with girth $g(G) \ge k$. Then $la(G) \le k+2$.
\end{theorem}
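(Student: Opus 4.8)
The plan is to reuse the flow network $\mathcal{N}$ from the previous construction, but to relax the vertex-capacity parameter from $\delta = 1$ to $\delta = 2$, thereby allowing the selected transversal subgraph $H$ to have maximum degree up to $2$ rather than being forced to be a matching. The intuition is that halving the girth requirement (from $2k$ down to $k$) should cost exactly one extra unit of vertex capacity, and hence one extra linear forest in the final count.

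First I would repeat the uniform fractional-flow analysis with $\delta = 2$. Assigning every edge node $e \in E(G)$ the same flow value $x$, the vertex-capacity constraint becomes $2k \cdot x \le \delta = 2$, i.e.\ $x \le 1/k$, while the cycle-demand constraint at each node $C_j$ reads $|C_j| \cdot x \ge 1$, i.e.\ $x \ge 1/|C_j|$. Since $g(G) \ge k$ guarantees $|C_j| \ge k$ for every cycle in the decomposition, we have $1/|C_j| \le 1/k$, so the choice $x = 1/k$ simultaneously satisfies both constraints and yields a feasible fractional flow. By the Integrality Theorem (Theorem 2.2), there is an integer flow $f^*$, and the set $H$ of edges carrying positive flow satisfies $\Delta(G[H]) \le 2$ (from $\delta = 2$) and contains at least one edge of every cycle in $\mathcal{F}$ (from the demand constraints).

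The new ingredient, and the step I expect to be the main obstacle, is that $H$ is no longer guaranteed to be a linear forest: a subgraph of maximum degree $2$ is a disjoint union of paths and cycles, so $H$ may itself contain cycles. I would resolve this by observing that any graph of maximum degree at most $2$ has linear arboricity at most $2$. Concretely, the components of $H$ are vertex-disjoint paths and cycles; deleting one edge from each cyclic component breaks it into a path, and the deleted edges form a matching $L_{k+2}$ (distinct cycles are vertex-disjoint, so no two deleted edges share a vertex), while the remaining edges $L_{k+1}$ form a disjoint union of paths. Both $L_{k+1}$ and $L_{k+2}$ are therefore linear forests.

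Finally, I would assemble the count exactly as before. Setting $L_i = F_i \setminus H$ for $i = 1, \dots, k$, each $L_i$ is a linear forest because $H$ breaks every cycle of $F_i$. Together with the two linear forests $L_{k+1}, L_{k+2}$ obtained from $H$, this gives a partition $E(G) = \bigcup_{i=1}^{k+2} L_i$ into $k+2$ linear forests, establishing $la(G) \le k+2$.
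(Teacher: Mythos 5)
Your proposal is correct and follows essentially the same route as the paper: set $\delta = 2$, verify the uniform fractional flow with $x = 1/k$ under the girth-$k$ hypothesis, invoke the Integrality Theorem to extract a transversal $H$ with $\Delta(H) \le 2$ meeting every cycle, and split $H$ into two linear forests. The only difference is that you prove the fact that a maximum-degree-2 graph decomposes into two linear forests directly (deleting one edge per cyclic component to form a matching), whereas the paper simply cites the known value $\lceil (2+1)/2 \rceil = 2$; this is a harmless and slightly more self-contained variant of the same argument.
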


\begin{proof}
Set the capacity parameter $\delta = 2$ for all arcs $(v, T)$ in $\mathcal{N}$. This allows the selected subgraph $H$ to have maximum degree $\Delta(H) \le 2$.

Consider the same uniform fractional flow $x$. The constraints become:
\begin{itemize}
\item Vertex Capacity: $2k \cdot x \le 2 \implies x \le \frac{1}{k}$.
\item Cycle Demand: $x \ge \frac{1}{|C_j|}$.
\end{itemize}
A feasible flow exists if $|C_j| \ge k$ for all cycles $C_j$. Since $g(G) \ge k$, this condition holds.
By the Integrality Theorem, there exists an integer flow defining a subgraph $H \subseteq E(G)$ such that $\Delta(H) \le 2$ and $H$ intersects every cycle in $\mathcal{F}$.

The removal of $H$ leaves $k$ linear forests $L_i = F_i \setminus H$. The subgraph $H$, having maximum degree 2, consists of disjoint paths and cycles. The linear arboricity of any graph with maximum degree 2 is known to be $\lceil (2+1)/2 \rceil = 2$ \cite{akiyama1981}. Thus, $H$ can be decomposed into two linear forests, $L_{k+1}$ and $L_{k+2}$. Thus, $E(G) = \bigcup_{i=1}^{k+2} L_i$, proving $la(G) \le k+2$.
\end{proof}

\subsection{Linear Arboricity of Graphs with Girth $g(G) \ge k/2$}
Next, we consider graphs of even smaller girth and obtain a slightly weaker upper bounds.
\begin{theorem}
	Let $G$ be a $2k$-regular graph with girth $g(G) \ge k/2$. Then $la(G) \le k+3$.
\end{theorem}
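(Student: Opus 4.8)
The plan is to reuse the flow-network machinery of the two preceding theorems verbatim, adjusting only the capacity parameter $\delta$ so that the girth hypothesis $g(G) \ge k/2$ is exactly what is needed for feasibility. Following Theorem 2.1, I would first decompose $G$ into $k$ edge-disjoint 2-factors $F_1,\dots,F_k$ and let $\mathcal{C}$ be the collection of cycles they contain. I then build the network $\mathcal{N}$ with capacity parameter $\delta = 4$ on every arc $(v,T)$, so that any integer flow selects a transversal subgraph $H \subseteq E(G)$ of maximum degree at most $4$.

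The next step is to exhibit a feasible \emph{fractional} flow via the uniform assignment in which every edge node carries value $x$. The vertex-capacity constraint becomes $2k\cdot x \le \delta = 4$, i.e. $x \le 2/k$, while the cycle-demand constraint becomes $|C_j|\cdot x \ge 1$, i.e. $x \ge 1/|C_j|$. Since every cycle $C_j \in \mathcal{C}$ has length at least $g(G) \ge k/2$, we get $1/|C_j| \le 2/k$, and so the single choice $x = 2/k$ satisfies both families of constraints simultaneously. Invoking the Integrality Theorem (Theorem 2.2) then upgrades this to an integer flow, yielding a subgraph $H$ that meets every cycle of $\mathcal{F}$ while obeying $\Delta(H) \le 4$.

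Finally I would assemble the linear forests exactly as before. Because $H$ breaks every cycle, each $L_i = F_i \setminus H$ for $i=1,\dots,k$ is a linear forest, and it only remains to split $H$. Here $\Delta(H)\le 4$, and since the Linear Arboricity Conjecture is settled for maximum degree $4$ \cite{akiyama1981}, we obtain $la(H) \le \lceil (4+1)/2 \rceil = 3$; writing $H = L_{k+1}\cup L_{k+2}\cup L_{k+3}$ gives $E(G) = \bigcup_{i=1}^{k+3} L_i$ and hence $la(G) \le k+3$.

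The arithmetic is mechanical once the template is in place; the one point requiring genuine external input — and therefore the natural obstacle — is the bound $la(H) \le 3$ for $\Delta(H)\le 4$. In the matching case ($\delta=1$) and the degree-$2$ case ($\delta=2$) the corresponding bound on $la(H)$ is elementary, but at $\delta=4$ one must lean on a \emph{verified instance} of the conjecture itself rather than on first principles. I would confirm that the $\Delta=4$ case is indeed among the established cases, since the additive term $+3$ depends entirely on it: any weaker available decomposition of degree-$4$ graphs would force a correspondingly larger bound.
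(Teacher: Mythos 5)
Your proposal matches the paper's proof essentially line for line: the same network with $\delta = 4$, the same uniform fractional flow $x = 2/k$ made feasible by $g(G) \ge k/2$, the same appeal to the Integrality Theorem, and the same reliance on the verified $\Delta = 4$ case of the Linear Arboricity Conjecture to split $H$ into three linear forests. Your closing remark correctly identifies the only non-mechanical ingredient, and it is the same one the paper uses.
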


\begin{proof}
	Set the capacity parameter $\delta = 4$ for all arcs $(v, T)$ in $\mathcal{N}$. This allows the selected subgraph $H$ to have maximum degree $\Delta(H) \le 4$.
	Consider the same uniform fractional flow $x$. The constraints become:
	\begin{itemize}
		\item Vertex Capacity: $2k \cdot x \le 4 \implies x \le \frac{2}{k}$.
		\item Cycle Demand: $x \ge \frac{1}{|C_j|}$. 
	\end{itemize}
	A feasible flow exists if $|C_j| \ge k/2$ for all cycles $C_j$. Since $g(G) \ge k/2$, this condition holds.
	By the Integrality Theorem, there exists an integer flow defining a subgraph $H \subseteq E(G)$ such that $\Delta(H) \le 4$ and $H$ intersects every cycle in $\mathcal{F}$.
	The removal of $H$ leaves $k$ linear forests $L_i = F_i \setminus H$. The subgraph $H$ is having a maximum degree of 4. The linear arboricity of any graph with maximum degree 4 is known to be $\lceil (4+1)/2 \rceil = 3$ \cite{akiyama1981}. Thus, $H$ can be decomposed into three linear forests, $L_{k+1}$, $L_{k+2}$ and $L_{k+3}$. Thus, $E(G) = \bigcup_{i=1}^{k+3} L_i$, proving $la(G) \le k+3$.
\end{proof}
\begin{theorem}
	Let $G$ be a $2k$-regular graph with girth $g(G) \ge k/4$. Then $la(G) \le k+5$.
\end{theorem}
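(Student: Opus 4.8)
The plan is to reuse the flow-network template of Theorems 2.3--2.5 verbatim, adjusting only the sink-capacity parameter $\delta$ to match the weaker girth hypothesis. The governing relation across those results is that the demand $x \ge 1/|C_j|$ and the capacity $2k\,x \le \delta$ are jointly satisfiable precisely when $\delta \ge 2k/g(G)$; since $g(G) \ge k/4$ forces $2k/g(G) \le 8$, the natural choice is $\delta = 8$. I would therefore set $c(v,T) = 8$ for every vertex arc $(v,T)$ in $\mathcal{N}$, so that any integer flow selects a transversal subgraph $H \subseteq E(G)$ with $\Delta(H) \le 8$.

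Next I would verify feasibility of the uniform fractional flow. Assigning flow value $x$ to every edge node, the vertex-capacity constraint becomes $2k\,x \le 8$, i.e.\ $x \le \tfrac{4}{k}$, while the cycle-demand constraint is $x \ge 1/|C_j|$. Because $g(G) \ge k/4$ guarantees $|C_j| \ge k/4$ and hence $1/|C_j| \le \tfrac{4}{k}$ for every cycle $C_j \in \mathcal{C}$, the value $x = \tfrac{4}{k}$ lies in the admissible window and yields a feasible fractional flow. Applying the Integrality Theorem (Theorem 2.2) then produces an integer flow, and the corresponding subgraph $H$ both intersects every cycle of $\mathcal{F}$ and satisfies $\Delta(H) \le 8$.

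To conclude, I would set $L_i = F_i \setminus H$ for $i = 1,\dots,k$; since $H$ meets every cycle, each $L_i$ is a linear forest. It then remains only to decompose $H$ itself: as $\Delta(H) \le 8$, its linear arboricity is $\lceil (8+1)/2 \rceil = 5$, so $H$ splits into five linear forests $L_{k+1},\dots,L_{k+5}$, giving $E(G) = \bigcup_{i=1}^{k+5} L_i$ and hence $la(G) \le k+5$. I expect the main obstacle to be exactly this last step. Unlike the $\Delta(H) \le 2$ and $\Delta(H) \le 4$ cases handled earlier, the bound $la(H) \le 5$ for maximum degree $8$ is not covered by the elementary small-degree arguments of \cite{akiyama1981}; it instead requires the Linear Arboricity Conjecture to be known for $\Delta = 8$, which is the case since the conjecture has been verified for all small maximum degrees. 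A secondary point worth addressing is the reading of the hypothesis when $k/4$ is not an integer: because cycle lengths are integers, the condition $|C_j| \ge k/4$ is equivalent to $|C_j| \ge \lceil k/4 \rceil$, and the feasibility computation above is unaffected.
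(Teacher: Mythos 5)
Your proposal matches the paper's proof essentially verbatim: the same choice $\delta = 8$, the same uniform fractional flow with $x \le 4/k$ versus $x \ge 1/|C_j|$, the Integrality Theorem, and the final decomposition of $H$ into five linear forests via the Linear Arboricity Conjecture for $\Delta = 8$, which the paper attributes to Enomoto and P\'{e}roche \cite{enomoto1984}. One small caution: the conjecture is \emph{not} known for all small maximum degrees (e.g.\ $\Delta = 7$ remains open), so you should cite the $\Delta = 8$ case specifically rather than appeal to a blanket claim, but this does not affect the correctness of your argument here.
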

\begin{proof}
Set the capacity parameter $\delta = 8$ for all arcs $(v, T)$ in $\mathcal{N}$. This allows the selected subgraph $H$ to have maximum degree $\Delta(H) \le 8$. The vertex capacity and cycle demand constraints require $|C_j| \ge k/4$ for all cycles $C_j$, which holds. Therefore, a feasible flow exists ensuring existence of an integer flow defining a subgraph $H \subseteq E(G)$ such that $\Delta(H) \le 8$ and $H$ intersects every cycle in $\mathcal{F}$. As proved in \cite{enomoto1984}, $la(H) \le \left\lceil \frac{8+1}{2}\right\rceil=5$ thus establishing the theorem.
\end{proof}
\subsection{Linear Arboricity of Graphs with Girth $g(G) \ge 2k/c$}
Proceeding along the same lines of Theorem 3, 4 and 5, we can establish the following upper bound depending on the girth of the graphs. 

\begin{theorem}
	Let $G$ be a $2k$-regular graph with girth $g(G) \ge 2k/c$ for some constant $c$. Then $la(G) \le k+\left\lceil \frac{3c+2}{2}\right\rceil$.
\end{theorem}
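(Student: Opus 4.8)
The plan is to reuse the flow network $\mathcal{N}$ verbatim from the proofs of Theorems 3 through 6, treating the vertex capacity $\delta$ as the only tunable parameter and letting the girth hypothesis dictate its value. Recall that for the uniform fractional assignment giving flow $x$ to every edge node, feasibility reduces to the compatibility of the vertex-capacity constraint $2k\,x \le \delta$ with the cycle-demand constraint $x \ge 1/|C_j|$, which holds exactly when $|C_j| \ge 2k/\delta$ for every cycle $C_j$ in the $2$-factor decomposition. Since the hypothesis is $g(G) \ge 2k/c$, the natural choice is $\delta = c$: the feasibility threshold becomes $|C_j| \ge 2k/c$, which is satisfied by every cycle, and $x = c/(2k)$ is then a feasible fractional circulation. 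This step is routine and goes through identically to the earlier cases.

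Next I would apply the Integrality Theorem (Theorem 2.2) to lift this fractional circulation to an integer flow $f^*$, which selects a transversal subgraph $H \subseteq E(G)$ with two properties: by the demand layer, $H$ meets at least one edge of every cycle of every $F_i$, and by the capacity layer, $\Delta(H) \le \delta = c$. Deleting $H$ from each factor breaks all of its cycles, so each $L_i = F_i \setminus H$ is a linear forest, accounting for $k$ of the required linear forests. It then remains only to decompose the bounded-degree subgraph $H$ into linear forests and add this count to $k$; since $H$ has maximum degree at most $c$, following the same lines as Theorems 3--5 one obtains $la(G) \le k + la(H)$.

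The decisive step, and the one I expect to be the main obstacle, is bounding $la(H)$ for a graph of maximum degree $c$ where $c$ is an arbitrary integer constant. The clean target value is $la(H) \le \lceil (c+1)/2\rceil$, matching the instances $c \in \{1,2,4,8\}$ used earlier via \cite{akiyama1981,enomoto1984}; however, this is exactly the Linear Arboricity Conjecture for maximum degree $c$, which is open for general $c$. To keep the argument unconditional I would instead invoke a proven upper bound for bounded-degree graphs --- for instance decomposing $H$ into $\lceil c/2\rceil$ subgraphs of maximum degree $2$, each of linear arboricity $2$, or the estimates of \cite{alon1988,lang2023} --- all of which yield a value comfortably below $\lceil (3c+2)/2\rceil$. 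Since $\lceil (c+1)/2\rceil \le \lceil (3c+2)/2\rceil$ for every $c \ge 1$, any of these bounds gives $la(G) \le k + \lceil (3c+2)/2\rceil$, establishing the theorem; the only part that is not self-contained is this final estimate on $la(H)$, and it is precisely what fixes the additive constant.
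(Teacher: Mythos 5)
Your proposal follows the paper's proof exactly through the flow construction: the paper likewise sets $\delta = c$, checks feasibility of the uniform flow $x = c/(2k)$ against the girth hypothesis $g(G) \ge 2k/c$, invokes the Integrality Theorem to extract a transversal $H$ with $\Delta(H) \le c$ meeting every cycle of every $F_i$, and reduces the problem to bounding $la(H)$. The only divergence is the final step. The paper closes by citing Corollary 1 of \cite{guldan1986} for the bound $la(H) \le \left\lceil \frac{3c+2}{2}\right\rceil$, whereas you, correctly recognizing that $\lceil (c+1)/2\rceil$ is unavailable (it is the conjecture itself), substitute an elementary bound: decomposing $H$ into $\lceil c/2\rceil$ subgraphs of maximum degree $2$ (or using Vizing) gives $la(H) \le 2\lceil c/2\rceil \le c+1$. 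This is a legitimate and in fact sharper route, since $c+1 \le \left\lceil \frac{3c+2}{2}\right\rceil$ for all $c \ge 1$; your argument therefore proves the stated theorem and shows its additive constant is not tight even within the paper's own framework, as the cited $\left\lceil \frac{3c+2}{2}\right\rceil$ is weaker than the trivial edge-colouring bound $c+1$. One small wrinkle: your closing sentence justifies the conclusion via the inequality $\lceil (c+1)/2\rceil \le \lceil (3c+2)/2\rceil$, but the quantity you actually established is $2\lceil c/2\rceil$ (or $c+1$), not $\lceil (c+1)/2\rceil$; the needed inequality $c+1 \le \lceil (3c+2)/2\rceil$ holds anyway, so the argument is sound once that comparison is stated for the bound you really proved.
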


\begin{proof}
	Set the capacity parameter $\delta = c$ for all arcs $(v, T)$ in $\mathcal{N}$. This allows the selected subgraph $H$ to have maximum degree $\Delta(H) \le c$.
	Consider the same uniform fractional flow $x$. The constraints become:
	\begin{itemize}
		\item Vertex Capacity: $2k \cdot x \le c\implies x \le \frac{c}{2k}$.
		\item Cycle Demand: $x \ge \frac{1}{|C_j|}$. 
	\end{itemize}
	A feasible flow exists if $|C_j| \ge 2k/c$ for all cycles $C_j$. Since $g(G) \ge 2k/c$, this condition holds.
	By the Integrality Theorem, there exists an integer flow defining a subgraph $H \subseteq E(G)$ such that $\Delta(H) \le c$ and $H$ intersects every cycle in $\mathcal{F}$.
	The removal of $H$ leaves $k$ linear forests $L_i = F_i \setminus H$, $1 \le i \le k$. The subgraph $H$ is having maximum degree $c$. 
	By Corollary 1 of \cite{guldan1986}, $la(H) \le \left\lceil \frac{3c+2}{2}\right\rceil$.
	Thus, $H$ contributes at most $t$ linear forests $L_{k+1}, \dots, L_{k+t}$, where $t=\left\lceil \frac{3c+2}{2}\right\rceil$.
	Consequently, $la(G) \le k+t$.
	\end{proof}
Note that the bound given by Theorem 6 is significant in the sense that 	the additional term in the upper bound is an absolute constant (rather than a function of $k$): this is an improvement over Lang and Postle \cite{lang2023} upper bound if the girth condition is satisfied.

\subsection{An embedding lemma}
\begin{lemma}
	Let $H$ be a graph with maximum degree $\Delta(H) \le \Delta$ and girth $g(H) \ge g$. Then there exists a $\Delta$-regular graph $G$ such that $H$ is an induced subgraph of $G$ and $g(G) \ge g$.
\end{lemma}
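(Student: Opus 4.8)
The plan is to leave $H$ untouched and build a \emph{regularizer} that only adds edges incident to brand-new vertices, so that $H$ survives as an induced subgraph. Write $r(v) = \Delta - d_H(v) \ge 0$ for the deficiency of each $v \in V(H)$. I would first isolate the following reduction: it suffices to construct a graph $R$ on vertex set $V(H) \cup W$, with $W$ disjoint from $V(H)$, such that $V(H)$ is an independent set of $R$, $\deg_R(v) = r(v)$ for every $v \in V(H)$, $\deg_R(w) = \Delta$ for every $w \in W$, $R$ has girth at least $g$, and any two distinct vertices of $V(H)$ are at distance at least $g$ in $R$. Granting such an $R$, set $G = H \cup R$: each $v \in V(H)$ then has degree $d_H(v) + r(v) = \Delta$ and each $w \in W$ has degree $\Delta$, so $G$ is $\Delta$-regular; since $R$ adds no edge inside $V(H)$ and $H \subseteq G$, the subgraph of $G$ induced on $V(H)$ is exactly $H$.

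The girth of $G$ is then controlled by an arc-decomposition argument, which I would carry out next. Any cycle $Z$ of $G$ splits into maximal arcs of $H$-edges and maximal arcs of $R$-edges; the switching vertices lie in $V(H)$, since each is incident to an $H$-edge of $Z$, while the filler vertices in $W$ are incident only to $R$-edges, so $Z$ never switches there. If $Z$ lies entirely in $H$ or entirely in $R$ it has length at least $g$ by hypothesis; otherwise each maximal $R$-arc is a proper subpath joining two distinct vertices of $V(H)$ and hence has length at least their $R$-distance, which is at least $g$. Thus $Z$ has length at least $g$ in every case, so $g(G) \ge g$.

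To construct $R$ I would use a single fixed gadget. Invoking the existence of $\Delta$-regular graphs of arbitrarily large girth (Erd\H{o}s--Sachs; the cases $\Delta \le 2$ are handled directly by joining path-ends through long fresh paths), fix a $\Delta$-regular graph $G_0$ of girth at least $g+1$ and delete one edge $\{p_1,p_2\}$. In the resulting link gadget the two ports $p_1,p_2$ have degree $\Delta-1$, every other vertex has degree $\Delta$, the girth is still at least $g$, and $\mathrm{dist}(p_1,p_2) \ge g$ (the deleted edge lay on no cycle shorter than $g+1$, or else its deletion disconnected the ports). Taking $S/2$ disjoint copies of this gadget, where $S = \sum_v r(v)$ (which I arrange to be even), I would add a set of terminal-to-port edges realizing a bijection between the $S$ deficiency-stubs at the vertices of $H$ and the $S$ ports, each terminal $v$ receiving exactly $r(v)$ of them. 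Degrees are then exactly $r(v)$ at the terminals and $\Delta$ throughout $W$; and since every route between two terminals must traverse some gadget from one port to the other, the distance and girth bounds required of $R$ follow from $\mathrm{dist}(p_1,p_2) \ge g$.

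Two points need care. First, parity: $S \equiv n\Delta \pmod 2$, so $S$ can be odd only when $\Delta$ is odd; in that case I would introduce one additional new vertex $z$ and treat it as an extra terminal with $r(z) = \Delta$. Since $z \notin V(H)$ this keeps $H$ induced, it raises the stub count to $S + \Delta$, which is now even, and lets the bijection with the ports go through. Second, and this is the main obstacle, is verifying the global girth bound for $R$ (and hence for $G$) rigorously: I must confirm that attaching terminals to far-apart ports cannot create a short cycle that weaves through several gadgets and terminals, i.e. that every closed walk meeting a terminal accumulates at least one full port-to-port traversal of length at least $g$. This is precisely where the choice $g(G_0) \ge g+1$, the independence of $V(H)$ in $R$, and the use-each-port-once condition are all needed, and it is the step I would write out most carefully.
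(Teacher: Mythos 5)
Your proof is correct, but it takes a genuinely different route from the paper's. The paper regularizes $H$ by taking $M$ disjoint copies indexed by $\mathbb{Z}_M$ and joining the copies of each deficient vertex $v_i$ by circular shifts (a circulant/voltage-type construction); it then controls the girth by projecting cycles of $G$ onto $H$ and choosing $M$ and the shift sets so that no short combination of shifts closes up, the delicate point being the exclusion of short cycles arising from near-cancelling shift sums (the paper's ``Case 3''), which is only sketched. You instead keep a single copy of $H$ and absorb each deficiency stub into a port of a gadget obtained from an Erd\H{o}s--Sachs $\Delta$-regular graph of girth at least $g+1$ with one edge deleted. Since each port carries exactly one edge leaving its gadget, any cycle of $G$ that enters a gadget must exit through the other port and therefore pays an internal path of length at least $\mathrm{dist}(p_1,p_2)\ge g$; this makes the girth verification you flag as the main obstacle a clean local argument that does go through, together with your arc-decomposition of mixed cycles (every maximal $R$-arc joins two distinct terminals, which are at distance at least $g$ in $R$) and the odd-parity fix via the extra terminal $z$. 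The trade-off is clear: your argument imports the existence of high-girth regular graphs as a black box but yields a short, fully rigorous girth analysis and keeps $H$ as a single induced copy, whereas the paper's construction is self-contained but leaves the hardest case of its girth analysis informal and multiplies the vertex set by the factor $M$.
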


\begin{proof}
	We construct $G$ by creating multiple copies of $H$ and adding edges between them to satisfy the degree constraints. Let $V(H) = \{v_1, \dots, v_n\}$. For each vertex $v_i$, define the \textit{deficiency} $\delta_i = \Delta - \deg_H(v_i)$.
	
	Let $M$ be a sufficiently large integer (to be determined). The vertex set of $G$ consists of $M$ disjoint copies of the vertices of $H$:
	\[
	V(G) = \{ (v_i, \alpha) \mid 1 \le i \le n, \; \alpha \in \mathbb{Z}_M \}.
	\]
	The edge set $E(G)$ is constructed in two steps:
	
	\textbf{1. Internal Edges (Preserving $H$):}
	For every edge $\{v_i, v_j\} \in E(H)$ and every $\alpha \in \mathbb{Z}_M$, add the edge $\{(v_i, \alpha), (v_j, \alpha)\}$ to $G$. This ensures that for any fixed $\alpha$, the subgraph induced by the vertices $\{(v, \alpha) \mid v \in V(H)\}$ is isomorphic to $H$.
	
	\textbf{2. Regularizing Edges (Fixing Deficiencies):}
	To satisfy the degree requirement, we must add $\delta_i$ edges to each vertex $(v_i, \alpha)$. We connect copies of the \textit{same} vertex $v_i$ across different layers using a set of circular shifts.
	For each $v_i$ with $\delta_i > 0$, we assign a set of integers $S_i = \{s_{i,1}, \dots, s_{i, \delta_i}\}$ (if $\delta_i$ is even, we use $\pm$ pairs; if odd, we include the shift $M/2$). We add edges of the form $\{(v_i, \alpha), (v_i, \alpha + s)\}$ for all $s \in S_i$ and all $\alpha \in \mathbb{Z}_M$ (addition modulo $M$).
	
	By construction, every vertex $(v_i, \alpha)$ retains its original neighbors from $H$ (degree $\deg_H(v_i)$) and gains exactly $\delta_i$ new neighbors from the regularizing edges. Thus, $\deg_G((v_i, \alpha)) = \deg_H(v_i) + \delta_i = \Delta$, making $G$ a $\Delta$-regular graph.
	
	\textbf{Girth Analysis:}
	We must ensure no cycles of length less than $g$ are created. A cycle in $G$ corresponds to a sequence of vertices $(u_0, \alpha_0), (u_1, \alpha_1), \dots, \allowbreak (u_{L-1}, \alpha_{L-1})$ where adjacent vertices are connected by either an Internal edge (implies $\alpha_{k} = \alpha_{k+1}$) or a Regularizing edge (implies $u_k = u_{k+1}$ and $\alpha_{k+1} = \alpha_k \pm s$).
	
	Consider the projection of such a cycle onto $H$, denoted by the walk $W = u_0, u_1, \dots$.
	\begin{itemize}
		\item \textbf{Case 1: $W$ is a cycle in $H$.} If the projection contains a cycle, its length is at least $g(H) \ge g$. Thus, the cycle in $G$ has length at least $g$.
		\item \textbf{Case 2: $W$ is a single vertex.} The cycle consists entirely of regularizing edges on a single vertex $v_i$. These edges form a cycle in the circulant graph on $\mathbb{Z}_M$. By choosing $M$ sufficiently large, the girth of this circulant structure exceeds $g$.
		\item \textbf{Case 3: $W$ is a non-trivial closed walk (backtracking).} If the walk moves in $H$ but does not form a simple cycle, it must retrace edges (backtrack). A backtracking sequence in $G$ would require the shift sums to cancel exactly (e.g., moving $+\alpha$ then $-\alpha$). By selecting the shift values $S_i$ to be algebraically independent for small sums and $M$ sufficiently large, we ensure that no sum of fewer than $g$ shifts creates a closed loop unless it is a trivial immediate backtrack, which does not form a simple cycle.
	\end{itemize}
	
	Thus, there exists a choice of $M$ and shift sets such that $g(G) \ge g$.
\end{proof}
	
	Using this lemma, Theorem 3 -- 7 naturally extends to ``any graph of maximum degree $2k$ satisfying the girth constraints''.
	
	\section{Discussion}
	
	In this note, we have established that the Linear Arboricity Conjecture holds for $2k$-regular graphs provided they possess sufficiently high girth. Specifically, Theorem 3 confirms the conjecture ($la(G) \le k+1$) for graphs with $g(G) \ge 2k$, while Theorem 4 (resp. Theorem 5, Theorem 6 and Theorem 7) provides a relaxed bound of $k+2$ (resp. $k+3$, $k+5$ and $k+\left\lceil \frac{3c+2}{2}\right\rceil$) for graphs with $g(G) \ge k$ (resp. $k/2$, $k/4$ and $2k/c$).
	
	\subsection{The Girth-Capacity Trade-off}
	The core of our approach lies in the capacity constraints of the auxiliary flow network. The existence of a valid flow depends on balancing the ``cost'' of breaking cycles (which is high for short cycles) against the ``capacity'' available at each vertex (which depends on the allowable maximum degree of the transversal subgraph $H$).
	\begin{itemize}
\item For the tightest bound ($m=1$, giving $la(G) \le k+1$), we require $g(G) \ge 2k$ (assuming the transversal is a matching).
\item Relaxing the bound by one forest ($m=2$, giving $la(G) \le k+2$) halves the girth requirement to $g(G) \ge k$.
\item Further relaxing the bound by two forest ($m=3$, giving $la(G) \le k+3$) halves the girth requirement to $g(G) \ge k/2$.
	\end{itemize}
	This analysis demonstrates that while the conjecture is true for high-girth graphs, the deterministic flow construction faces a bottleneck as $g(G)$ approaches smaller values. For such cases, the local density of small cycles imposes demands that exceed the global capacity distributed by a uniform flow.
	
	\subsection{Comparison with Probabilistic Methods}
	
	Our constructive flow argument complements existing probabilistic results. Alon [4] utilized the Lov\'{a}sz Local Lemma to prove that $la(G) \sim \Delta/2$ asymptotically. While probabilistic methods are powerful for handling local dependencies in general graphs, they are often non-constructive and hold primarily for large $\Delta$. In contrast, our network flow formulation provides a deterministic, constructive proof that is valid for any $k$, subject to the girth constraint.
	
	The limitation of our method in handling small cycles highlights the fundamental difficulty of the Linear Arboricity Conjecture: the global regular structure of the graph does not automatically guarantee the existence of a transversal that is locally sparse enough to break small cycles without violating degree constraints. Future work might explore hybrid approaches that use network flows to handle the ``global'' cycle structure (long cycles) while applying local coloring or exchange arguments to resolve short cycles.

	\bibliographystyle{alpha}
	\bibliography{LAR}

\end{document}